\newlength{\defbaselineskip}
\newcommand{\setlinespacing}[1]%
           {\setlength{\baselineskip}{#1 \defbaselineskip}}
\noindent \textit{Proof of #1. }}%
\newcommand{\IN}{\mathbb{N}}
\newcommand{\IB}{\mathbb{B}}
\newcommand{\IR}{\mathbb{R}}
\newtheorem{Theorem}{Theorem}[section]
\newtheorem{Proposition}{Proposition}[section]
\newtheorem{remark}{Remark}[section]
\numberwithin{equation}{section}
\begin{document}

\title{{On the orthogonal democratic systems  in  the $L^p$ spaces}\thanks{\it Math Subject Classifications.
primary:  41A65; secondary: 41A25, 41A46, 46B20.}}
\author{K. Kazarian \thanks{ Dept. of Mathematics, Mod. 17, Universidad Aut\'onoma de Madrid, 28049, Madrid, Spain e-mail: kazaros.kazarian@uam.es }
and A. San Antol\'{\i}n \thanks{Dept. of Mathematics, Universidad de Alicante, 03690 Alicante, Spain  e-mail: angel.sanantolin@ua.es}}

 \maketitle

\begin{abstract}
{The concept of bidemocratic pair for a Banach space was introduced in \cite{KS:18}.
 We construct a family of orthonormal systems $\mathfrak{F}_{l},$ $l\in (0,\infty)$ of functions defined on $[-1,1]$ such that
 the pair $(\mathfrak{F}_{l},\mathfrak{F}_{l})$ is bidemocratic for $L^{p}[-1,1]$  and for $L^{p'}[-1,1]$  if $l\in (0, \frac{p}{2(p-2)}]$, where $p>2$ and $p'= \frac{p}{p-1}$.    The system $\mathfrak{F}_{l}$  is not democratic for $L^{p'}[-1,1]$ when $l\in (\frac{p}{2(p-2)}, \frac{p}{p-2}). $
When $l> \frac{p}{2(p-2)}$ the pair $(\mathfrak{F}_{l},\mathfrak{F}_{l})$ is not bidemocratic neither for $L^{p}[-1,1]$ nor for $L^{p'}[-1,1]$.  }
\end{abstract}

\section{Introduction}

Greedy algorithms have been studied extensively during last two decades. S.V. Konyagin and V.N. Temlyakov \cite{KonT}  gave a
characterization  of greedy bases: a basis is greedy if and only if it is unconditional and democratic.
An infinite system $X =\{x_{k}\}_{k=1}^{\infty}$ in a Banach space $\IB$ will be called
  a democratic  system for $\IB$ if there exists a constant $D >1$ such that, for any two finite sets of indices $P$ and $Q$ with the same cardinality $|P|=|Q|$, we have
\begin{equation}\label{dem:1}
\bigg\|\sum_{k\in P} \frac{x_k}{\|x_k\|}\bigg\| \le D\bigg\|\sum_{k\in Q} \frac{x_k}{\|x_k\|}\bigg\|.
\end{equation}
 A pair of systems $X =\{x_{k}\}_{k=1}^{\infty} \subset \IB, $ $X^{*} =\{x^{*}_{k}\}_{k=1}^{\infty} \subset \IB^{*} $ is called biorthogonal if $ x^{*}_{k}(x_{m}) = \delta_{km}$, where $\delta_{km}$ is the Kronecker symbol. In \cite{DKKT} bidemocratic bases have been studied.
Following \cite{DKKT} we put
\[
\varphi_{X}(n) = \sup_{|P|\leq n} \bigg\|\sum_{k\in P} \frac{x_k}{\|x_k\|} \bigg\|_{\IB}, \qquad \varphi_{X^{*}}^{*}(n) = \sup_{|P|\leq n} \bigg\|\sum_{k\in P} \|x_k\| \, x^{*}_k \bigg\|_{\IB^{*}}
\]
and will say that a pair of biorthogonal systems $(X,X^{*})$ is bidemocratic for $\IB$ if there exists $C>0$ such that for any  $n\in \IN$
\begin{equation}\label{bide:1}
\varphi_{X}(n) \varphi_{X^{*}}^{*}(n)  \leq C n.
\end{equation}
Modifying the definition given in \cite{DKKT} we say that $\varphi_{X}(n)$ is the fundamental function and $\varphi_{X^{*}}^{*}(n)$ is the dual fundamental function of the pair of biorthogonal systems $(X,X^{*})$. It is proved in \cite{DKKT} that a bidemocratic basis is a democratic basis. The above definition of bidemocratic system is given for minimal systems which are not necessarily bases.
Further we will check that if a pair of biorthogonal systems $(X,X^{*})$ is bidemocratic for $\IB$ then the system $X$ is democratic in $\IB$.
 It is clear that if a system is democratic for $\IB$ then any its infinite subsystem is also democratic. Using the concept of bidemocratic pair we find conditions for which the inverse assertion is also true. This idea was used in \cite{KS:18} (see also \cite{KKS:18}) to give a complete characterization of weight functions $\omega$ for which the higher rank Haar wavelets are bidemocratic systems for $L^{p}(\IR,\omega), 1<p<\infty$.

One of the main purposes of the article \cite{DKKT} was the study of the duality properties of the greedy algorithms. For example, if $\IB$ is a reflexive Banach space, the pair of biorthogonal systems $(X,X^{*})$ is bidemocratic for $\IB$ and $\|x_{j}\|_{\IB}\cdot \|x^{*}_{j}\|_{\IB^{*}} = \theta, j\in \IN$ for some $\theta \geq 1$ then the pair of biorthogonal systems $(X^{*}, X)$ is bidemocratic for $\IB^{*}$. Of course, we came to the same conclusion if $\varphi_{X} \asymp \varphi^{*}_{X^{*}}$ and $\varphi^{*}_{X} \asymp \varphi_{X^{*}}.$
 We say that $\varphi$ and $\psi$ are equivalent, $\varphi \asymp \psi$ if $\varphi$ and $\psi$ defined on $\IN$ with values in $\IR^{+} = \{t\in \IR: t \geq 0\}$ and for some $0<C_{1}<C_{2}$ we have that $C_{1} \varphi(n) \leq \psi(n) \leq C_{2}\varphi(n), n\in \IN$.

 We construct a family of orthonormal systems such that they are bidemocratic for $L^{p}$ but for a subset of parameters they are not democratic for the dual space $L^{p'}$, for another set of parameters those systems are democratic for $L^{p'}$ but not bidemocratic for $L^{p}.$ Finally, for another set of parameters they are bidemocratic for $L^{p'}$.

The characteristic function of a set $E$ is denoted by $I_E$ and $\IN_{0} = \IN\bigcup\{0\}$. Let $E \subseteq\IR, |E|>0$ be a  measurable set  then we write $\phi \in L^{p}(E), 1 \leq p <\infty$
if $\phi:E \rightarrow \IR$ is measurable on $E$ and the norm is defined by
\[
 \| \phi \|_{L^{p}(E)}:= \left( \int_{E} |\phi(t)|^{p}  dt\right)^{\frac{1}{p}} < +\infty.
 \]

\

\section{Democratic systems}

Let $\IN_{j} \subset \IN, 1\leq j \leq \nu$ be such that $\IN_{j} \bigcap \IN_{i} = \emptyset$ if $i\neq j,$ $\text{card}\, \IN_{j} = \infty, 1\leq j \leq \nu$ and $\bigcup_{j=1}^{\nu} \IN_{j} = \IN$. For a given pair of biorthogonal systems $(X,X^{*})$ consider the biorthogonal pairs $(X_{j},X_{j}^{*}),$ where
$X_{j} = \{ x_{k} \}_{k\in \IN_{j}},$ $ X^{*}_{j} = \{ x^{*}_{k} \}_{k\in \IN_{j}},$ $ 1\leq j \leq \nu.$

\begin{Proposition}\label{prop:2.3}
Let $(X_{j},X_{j}^{*}), 1\leq j \leq \nu$ be  pairs of biorthogonal systems defined as above. If pairs $(X_{j},X_{j}^{*}), 1\leq j \leq \nu$ are bidemocratic for $\IB$ and for any $1\leq i<j\leq \nu$ the functions  $\varphi_{X_{i}}(\cdot), \varphi_{X_{j}}(\cdot)$ are equivalent then
  the pair of  biorthogonal systems $(X, X^{*})$ is bidemocratic for $\IB$. Moreover,    $\varphi_{X}(\cdot)$ and $\varphi_{X_{j}}(\cdot)$ are equivalent for any $1\leq j \leq \nu$.
\end{Proposition}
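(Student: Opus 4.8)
The plan is to derive everything from two elementary properties of the fundamental functions relative to the partition $\IN=\bigcup_{j=1}^{\nu}\IN_{j}$: \emph{subadditivity} and \emph{monotonicity under passage to subsystems}.

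First I would establish subadditivity. Fix $n\in\IN$ and a set $P$ with $|P|\le n$, and set $P_{j}=P\cap\IN_{j}$, so that $P=\bigcup_{j=1}^{\nu}P_{j}$ is disjoint and $|P_{j}|\le n$ for each $j$. The triangle inequality in $\IB$ gives
\[
\Bigl\|\sum_{k\in P}\frac{x_{k}}{\|x_{k}\|}\Bigr\|_{\IB}\le\sum_{j=1}^{\nu}\Bigl\|\sum_{k\in P_{j}}\frac{x_{k}}{\|x_{k}\|}\Bigr\|_{\IB}\le\sum_{j=1}^{\nu}\varphi_{X_{j}}(|P_{j}|)\le\sum_{j=1}^{\nu}\varphi_{X_{j}}(n),
\]
and taking the supremum over all admissible $P$ yields $\varphi_{X}(n)\le\sum_{j=1}^{\nu}\varphi_{X_{j}}(n)$. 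Running the same argument in $\IB^{*}$ on the functional $\sum_{k\in P}\|x_{k}\|\,x_{k}^{*}$ (the value $\|x_{k}\|_{\IB}$ being the same whether $x_{k}$ is regarded inside $X$ or inside the block $X_{j}$ containing it) gives $\varphi_{X^{*}}^{*}(n)\le\sum_{j=1}^{\nu}\varphi_{X_{j}^{*}}^{*}(n)$. Monotonicity is even easier: restricting the supremum defining $\varphi_{X}(n)$ to index sets $P\subset\IN_{j}$ shows $\varphi_{X}(n)\ge\varphi_{X_{j}}(n)$ for every $j$.

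Next I would pass from the pairwise equivalences to a single uniform one: since $\nu<\infty$ and $\varphi_{X_{i}}\asymp\varphi_{X_{j}}$ for all $i<j$ (hence for all $i,j$), there is a constant $A\ge 1$ with $A^{-1}\varphi_{X_{1}}(n)\le\varphi_{X_{j}}(n)\le A\,\varphi_{X_{1}}(n)$ for all $j$ and all $n$. Combined with the previous step this already proves the last assertion: $\varphi_{X_{1}}(n)\le\varphi_{X}(n)\le\sum_{j=1}^{\nu}\varphi_{X_{j}}(n)\le\nu A\,\varphi_{X_{1}}(n)$, so $\varphi_{X}\asymp\varphi_{X_{j}}$ for every $j$. (That $(X,X^{*})$ is itself a biorthogonal pair is assumed, so asking whether it is bidemocratic is legitimate.)

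Finally, for bidemocracy I would multiply the two subadditivity bounds and expand the resulting double sum:
\[
\varphi_{X}(n)\,\varphi_{X^{*}}^{*}(n)\le\sum_{i=1}^{\nu}\sum_{j=1}^{\nu}\varphi_{X_{i}}(n)\,\varphi_{X_{j}^{*}}^{*}(n).
\]
For a diagonal term ($i=j$) the bidemocracy of $(X_{j},X_{j}^{*})$ gives $\varphi_{X_{j}}(n)\,\varphi_{X_{j}^{*}}^{*}(n)\le C_{j}n$; for an off-diagonal term I first replace $\varphi_{X_{i}}(n)$ by $A\,\varphi_{X_{j}}(n)$ using the uniform equivalence and then apply bidemocracy of $(X_{j},X_{j}^{*})$, obtaining $\varphi_{X_{i}}(n)\,\varphi_{X_{j}^{*}}^{*}(n)\le A\,C_{j}n$. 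Summing the $\nu^{2}$ terms gives $\varphi_{X}(n)\,\varphi_{X^{*}}^{*}(n)\le Cn$ with $C=A\nu\sum_{j=1}^{\nu}C_{j}$, which is \eqref{bide:1}. The argument is routine; the only place where care is needed is the off-diagonal cross terms, and it is precisely there that the hypothesis $\varphi_{X_{i}}\asymp\varphi_{X_{j}}$ is indispensable — without it the product $\varphi_{X_{i}}(n)\,\varphi_{X_{j}^{*}}^{*}(n)$ need not be $O(n)$ and the conclusion can fail.
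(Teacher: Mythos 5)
Your proof is correct and follows essentially the same route as the paper: subadditivity of the fundamental functions over the blocks, monotonicity under passage to subsystems, and the pairwise equivalence to control the cross terms (the paper works with $\max_{j}\varphi_{X_{j}}$ instead of your double sum, but this is only a cosmetic difference in bookkeeping). The only nitpick is that replacing $\varphi_{X_{i}}(n)$ by $A\,\varphi_{X_{j}}(n)$ via the chain through $\varphi_{X_{1}}$ costs $A^{2}$ rather than $A$, which of course does not affect the argument.
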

\begin{proof}
Let
\[
\tilde{\varphi}_{X}(n) = \max_{1\leq j\leq \nu} \varphi_{X_{j}}(n) \qquad n\in \IN.
\]
We have that for any $n\in \IN$
\[
 \frac{1}{\nu} \varphi_{X}(n) \leq   \tilde{\varphi}_{X}(n) \leq  \varphi_{X}(n).
\]
The right hand inequality is obvious. On the other hand
\[
 \varphi_{X}(n) \leq \sum_{j=1}^{\nu}  \varphi_{X_{j}}(n)  \leq \nu \, \tilde{\varphi}_{X}(n).
 \]
Let $P\subset \IN$ be a finite set. We have that
\begin{align*}\label{dem:1}
  &\varphi_{X}(|P|) \varphi_{X^{*}}^{*}(|P|)  \leq \nu \, \tilde{\varphi}_{X}(|P|) \sum_{j=1}^{\nu} \varphi_{X_{j}^{*}}^{*}(|P|)\\
&\leq
\nu \,  \sum_{j=1}^{\nu} C_{j} \varphi_{X_{j}}(|P|) \varphi_{X_{j}^{*}}^{*}(|P|) \leq \nu \,C \sum_{j=1}^{\nu} C'_{j}|P|,
\end{align*}
where $C= \max_{1\leq j\leq \nu} C_{j}$.

\end{proof}

The condition \eqref{bide:1} yields
\begin{remark}\label{rem:1}
If a pair of biorthogonal systems $(X,X^{*})$ is bidemocratic for $\IB$ then there exists $C>0$  such that for any $k\in \IN$
\begin{equation}\label{bide:2}
\|x_k\|_{\IB}\cdot \|x^{*}_k\|_{\IB^{*}} \leq C .
\end{equation}
\end{remark}

It is proved in \cite{DKKT} that a bidemocratic basis is a democratic basis.
The proof given in \cite{DKKT} for bases also works for the proof of the following
\begin{Proposition}\label{prop:2.1}
Let $(X,X^{*})$ be a pair of biorthogonal systems   bidemocratic for $\IB$. Then   the system $X$ is democratic for $\IB$.
\end{Proposition}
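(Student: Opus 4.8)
The plan is to reduce the democracy inequality \eqref{dem:1} to a single lower bound: I will show that there is a constant $C$ such that
\[
\varphi_{X}(n) \le C\,\Big\|\sum_{k\in Q}\frac{x_k}{\|x_k\|}\Big\|_{\IB}
\qquad\text{for every finite }Q\subset\IN\text{ with }|Q|=n .
\]
Once this is established, the proposition follows at once: if $|P|=|Q|=n$ then by the very definition of $\varphi_X$ we have $\big\|\sum_{k\in P}x_k/\|x_k\|\big\|\le\varphi_X(n)\le C\big\|\sum_{k\in Q}x_k/\|x_k\|\big\|$, so $X$ is democratic with constant $D=\max\{C,2\}>1$.

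To prove the displayed lower bound I would test the vector $f:=\sum_{k\in Q}x_k/\|x_k\|$ against the functional $\psi:=\sum_{k\in Q}\|x_k\|\,x^{*}_k\in\IB^{*}$. By biorthogonality $x^{*}_j(x_k)=\delta_{jk}$, so
\[
\psi(f)=\sum_{j\in Q}\sum_{k\in Q}\|x_j\|\,\frac{1}{\|x_k\|}\,x^{*}_j(x_k)=\sum_{k\in Q}\|x_k\|\,\frac{1}{\|x_k\|}=|Q|=n ,
\]
while, directly from the definition of the dual fundamental function, $\|\psi\|_{\IB^{*}}\le\varphi^{*}_{X^{*}}(n)$. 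Hence
\[
\Big\|\sum_{k\in Q}\frac{x_k}{\|x_k\|}\Big\|_{\IB}\ \ge\ \frac{\psi(f)}{\|\psi\|_{\IB^{*}}}\ \ge\ \frac{n}{\varphi^{*}_{X^{*}}(n)} .
\]
Now I invoke bidemocracy \eqref{bide:1}, which gives $\varphi_{X}(n)\varphi^{*}_{X^{*}}(n)\le Cn$, hence $n/\varphi^{*}_{X^{*}}(n)\ge\varphi_{X}(n)/C$, and the claimed bound follows with exactly this $C$.

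The only point requiring a word of care is that the division above is legitimate, i.e. that $\varphi^{*}_{X^{*}}(n)$ is finite and strictly positive. Positivity is clear, since for a single index $j$ one has $\|\|x_j\|\,x^{*}_j\|_{\IB^{*}}=\|x_j\|\,\|x^{*}_j\|_{\IB^{*}}\ge\|x_j\|\cdot\frac{|x^{*}_j(x_j)|}{\|x_j\|}=1$, so $\varphi^{*}_{X^{*}}(n)\ge 1$; and likewise $\varphi_{X}(n)\ge 1$, whence $\varphi^{*}_{X^{*}}(n)\le Cn<\infty$ by \eqref{bide:1}. This is essentially the argument of \cite{DKKT} for bases, and since it uses only biorthogonality and the fact that all the sums involved are finite, nothing about $X$ being a basis is needed; I therefore do not expect any genuine obstacle here — the substantive content is simply the choice of the test functional $\psi=\sum_{k\in Q}\|x_k\|x^{*}_k$ and the bookkeeping with $\varphi_X$ and $\varphi^{*}_{X^{*}}$.
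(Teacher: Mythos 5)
Your proof is correct and is essentially the same duality argument the paper relies on: the paper itself gives no details but defers to the proof in \cite{DKKT} for bases, which is exactly your computation of testing $\sum_{k\in Q}x_k/\|x_k\|$ against $\sum_{k\in Q}\|x_k\|x_k^{*}$ and invoking \eqref{bide:1}. Your added remark that only biorthogonality and finiteness of the sums are used (so the basis hypothesis is dispensable) is precisely the observation the authors make when extending the statement to minimal systems.
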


We are going to construct a family of orthonormal systems in order to clarify some duality properties of orthonormal systems if it is democratic  for the $L^{p}, 1<p<\infty$ spaces.

Let $\chi$ be an orthonormal system of functions defined on $[-1,1]$ as follows:

For any $n\in \IN$ we divide the interval $(-2^{-n+1}, -2^{-n}]$ into $2^{n}$ equal intervals $\Delta_{j}^{n}, 1\leq j\leq 2^{n}$ such that $\Delta_{j}^{n}\bigcap \Delta_{i}^{n} = \emptyset$ if $i\neq j$.

Set $\chi_{j}^{n}(x) = 2^{n}I_{\Delta_{j}^{n}}(x): 1\leq j\leq 2^{n}, n\in \IN$, where $I_{E}(\cdot)$ is the characteristic function of the set $E\subset [-1,1]$.
It is clear that the system
$\chi = \{ \chi_{j}^{n}(x) : 1\leq j\leq 2^{n}, n\in \IN \},$  is an orthonormal system of functions on $[-1,1]$.

 We put $k_{0}=0$ and $k_{n} = k_{n-1} + 2^{n} = 2(2^{n}-1), n\in \IN$. In our construction we use the Rademacher system $\{r_{k}(t)\}_{k=1}^{\infty},$ which is an orthonormal system of functions defined on $[0,1]$(see \cite{KaSt:35},\cite{Kah:93}).
Let
\[
f_{j}^{(n,l)}(x) =
\begin{cases}
\sqrt{1-2^{-\frac{n}{l}}} \chi_{j}^{n}(x),     &\text{if $x\in [-1,0)$;}\\
2^{-\frac{n}{2l}}r_{k_{n-1}+j}(x)     &\text{if $x\in [0,1]$,}
\end{cases}
\]
$1\leq j\leq 2^{n}, n\in \IN $ and $l\in (0, \infty)$.

 Let $f^{l}_{k}(x) = f_{j}^{(n,l)}(x)$ if $k= k_{n-1}+j$ and $1\leq j\leq 2^{n}$.
For any fixed $l\in (0, \infty)$ the system
$
\mathfrak{F}_{l} = \{ f_{k}^{l}(x) \}_{k=1}^{\infty}
$
is an orthonormal system of functions defined on $[-1,1]$.

\begin{Proposition}\label{prop:2.4}
For any $l\in (0, \infty)$
 he system $\mathfrak{F}_{l}$ is a democratic system for $L^{p}{[-1,1]},$  $ 2\leq p <\infty,$  and  $\varphi_{\mathfrak{F}_{l}}(m) \asymp m^{\frac{1}{p}}$.
\end{Proposition}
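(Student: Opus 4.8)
The plan is to establish the uniform two-sided estimate
\[
\Big\|\sum_{k\in P}\frac{f_{k}^{l}}{\|f_{k}^{l}\|_{p}}\Big\|_{p}^{p}\asymp |P|
\]
for every finite set $P\subset\IN$, with constants depending only on $l$ and $p$; the equivalence $\varphi_{\mathfrak{F}_{l}}(m)\asymp m^{1/p}$ and, since $\mathfrak{F}_{l}$ consists of unit vectors in $L^{2}$, the democratic inequality for $L^{p}[-1,1]$ are then immediate consequences. I begin with the elementary norm computation: from $\int_{-1}^{0}|\chi_{j}^{n}|^{p}=2^{np}|\Delta_{j}^{n}|=2^{np}\cdot 2^{-2n}=2^{n(p-2)}$ and $\int_{0}^{1}|r_{k_{n-1}+j}|^{p}=1$ one gets
\[
\|f_{j}^{(n,l)}\|_{p}^{p}=(1-2^{-n/l})^{p/2}\,2^{n(p-2)}+2^{-np/(2l)},\qquad n\in\IN ,
\]
which for $2\le p<\infty$ satisfies $\|f_{j}^{(n,l)}\|_{p}^{p}\asymp 2^{n(p-2)}$, i.e. $\|f_{j}^{(n,l)}\|_{p}\asymp 2^{n(1-2/p)}$ (and $\|f_{j}^{(n,l)}\|_{2}=1$); in particular this norm does not depend on $j$.

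Now fix a finite $P$ and group its indices by level: $P=\bigcup_{n\in\IN}P_{n}$ with $P_{n}\subseteq\{k_{n-1}+1,\dots,k_{n}\}$, $m_{n}:=|P_{n}|\le 2^{n}$ and $\sum_{n}m_{n}=|P|=:m$. The negative parts of the level-$n$ functions are supported on the $\Delta_{j}^{n}\subset(-2^{-n+1},-2^{-n}]$, hence on sets pairwise disjoint across all levels, whereas the positive parts are pairwise distinct Rademacher functions; therefore
\[
\Big\|\sum_{k\in P}\frac{f_{k}^{l}}{\|f_{k}^{l}\|_{p}}\Big\|_{p}^{p}=\int_{-1}^{0}\Big|\sum_{k\in P}\frac{f_{k}^{l}}{\|f_{k}^{l}\|_{p}}\Big|^{p}\,dx+\int_{0}^{1}\Big|\sum_{k\in P}\frac{f_{k}^{l}}{\|f_{k}^{l}\|_{p}}\Big|^{p}\,dx .
\]
On $[-1,0)$ the sum is the step function $\sum_{n}c_{n}I_{E_{n}}$, where $E_{n}=\bigcup_{j\in P_{n}}\Delta_{j}^{n}$ are pairwise disjoint with $|E_{n}|=m_{n}2^{-2n}$ and $c_{n}=\sqrt{1-2^{-n/l}}\,2^{n}/\|f_{j}^{(n,l)}\|_{p}$; thus the first integral equals $\sum_{n}m_{n}(1-2^{-n/l})^{p/2}2^{n(p-2)}/\|f_{j}^{(n,l)}\|_{p}^{p}$, and by the norm estimate the coefficient multiplying $m_{n}$ lies in a fixed interval $[\theta,1]$ with $\theta=\theta(l,p)>0$ independent of $n$. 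Hence this contribution is $\asymp m$, which already yields the lower bound $\big\|\sum_{k\in P}f_{k}^{l}/\|f_{k}^{l}\|_{p}\big\|_{p}^{p}\gtrsim m$.

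It remains to bound the $[0,1]$ integral by $C(l,p)\,m$. There the sum equals $\sum_{i\in S}b_{i}r_{i}$ with $S=\bigcup_{n}(k_{n-1}+P_{n})$, $|S|=m$, and $b_{i}=2^{-n/(2l)}/\|f_{j}^{(n,l)}\|_{p}$ for $i$ of level $n$; by Khintchine's inequality this integral is comparable to $\big(\sum_{i\in S}b_{i}^{2}\big)^{p/2}=\big(\sum_{n}m_{n}2^{-n/l}/\|f_{j}^{(n,l)}\|_{p}^{2}\big)^{p/2}$. Using $\|f_{j}^{(n,l)}\|_{p}^{2}\asymp 2^{2n(p-2)/p}$ this reduces to controlling $\sum_{n}m_{n}2^{-n\alpha}$ with $\alpha=\tfrac1l+\tfrac{2(p-2)}{p}$. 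Since $2^{-n\alpha}$ is decreasing in $n$ while $m_{n}\le 2^{n}$ and $\sum_{n}m_{n}=m$, a rearrangement argument (place the weights $m_{n}$ at the lowest admissible levels, with the usual adjustment for the last, partially filled level) gives $\sum_{n}m_{n}2^{-n\alpha}\lesssim m^{1-\alpha}$ when $\alpha<1$, and $\lesssim\log(m+1)$ (resp. $\lesssim 1$) when $\alpha=1$ (resp. $\alpha>1$). Here is the one point where $p\ge 2$ is used: $\alpha=\tfrac1l+\tfrac{2(p-2)}{p}>2-\tfrac4p\ge 1-\tfrac2p$ for $p\ge 2$, so $1-\alpha<\tfrac2p$ and in every case $\sum_{n}m_{n}2^{-n\alpha}\lesssim m^{2/p}$; consequently the $[0,1]$ integral is $\lesssim (m^{2/p})^{p/2}=m$. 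Combining with the previous paragraph, $\big\|\sum_{k\in P}f_{k}^{l}/\|f_{k}^{l}\|_{p}\big\|_{p}^{p}\asymp m$, so $\varphi_{\mathfrak{F}_{l}}(m)\asymp m^{1/p}$, and for any $P,Q$ with $|P|=|Q|$ the democratic inequality holds with a constant depending only on $l$ and $p$.

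I expect the main obstacle to be the analysis of the positive (Rademacher) part on $[0,1]$: one must notice that after the $L^{p}$-normalization — which for $p>2$ divides the level-$n$ functions by roughly $2^{n(p-2)/p}$ — the Khintchine $\ell^{2}$-sum of the coefficients, summed level by level, behaves like a geometric series and stays of order at most $m^{2/p}$, so that this part never overtakes the contribution $\asymp m$ produced by the step-function part on $[-1,0)$. For $p=2$ the whole statement is of course trivial, since $\mathfrak{F}_{l}$ is orthonormal.
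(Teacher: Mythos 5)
Your proposal is correct and follows essentially the same route as the paper: split the $L^p$ norm into the $[-1,0)$ step-function part (which gives the contribution $\asymp m$ and hence the lower bound) and the $[0,1]$ Rademacher part, control the latter by Khintchine's inequality, and bound the resulting $\ell^2$ coefficient sum by a geometric series using $m_n\le 2^n$ — your parameter $\alpha$ is exactly $1-\kappa$ in the paper's notation, and your case split $\alpha<1$ versus $\alpha\ge 1$ matches the paper's $\kappa>0$ versus $\kappa\le 0$. Your handling of the lower bound (uniform lower bound $\theta(l,p)$ on the level coefficients) is slightly more explicit than the paper's restriction to levels $n\ge n_0$ followed by an adjustment of the constant, but it is the same idea.
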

\begin{proof}
The proposition is obviously true if $p=2$. Thus we only will consider the case $p>2$.
We have that
\begin{equation}\label{eq:1.0}
|c_{n,l}|^{p}:= \bigg\|  f_{j}^{(n,l)} \bigg\|^{p}_{L^{p}{[-1,1]}} = 2^{n(p-2)}(1-2^{-\frac{n}{l}})^{\frac{p}{2}} +2^{-\frac{np}{2l}}.
\end{equation}
Let $|b_{k,l}|^{p}:=|c_{n,l}|^{p}$ if $k= k_{n-1}+j$ and $1\leq j\leq 2^{n}$.

We prove that there exists $0 <C_{1}(l,p) < C_{2}(l,p) $ such that for any finite set $A\subset \IN, |A| =m$
\begin{equation}\label{eq:1a}
C_{1}(l,p) m^{\frac{1}{p}} \leq \bigg\| \sum_{k\in A} \frac{1}{|b_{k,l}|} f_{k}^{l} \bigg\|_{L^{p}{[-1,1]}} \leq C_{2}(l,p) m^{\frac{1}{p}}.
\end{equation}
Let $n_{0}=  [l]+1,$ where $ [l] \in \IN$ and $[l]\leq l <[l]+1$. Thus $2^{-\frac{n p}{2l}} \leq 2^{-\frac{p}{2}}$ if $n\geq n_{0}$. We have that
\[
2^{-\frac{np}{2l}}\cdot 2^{-n(p-2)}(1-2^{-\frac{n}{l}})^{-\frac{p}{2}} \leq 2^{-\frac{p}{2}}2^{2-p} 2^{\frac{p}{2}} = 2^{2-p}
\]
when $n\geq n_{0}$.
Then for any $A\subset [n_{0},\infty) \bigcap \IN$, $|A|= m$
we have that
\[
  \bigg\| \sum_{k\in A}\frac{1}{|b_{k,l}|} f_{k}^{l} \bigg\|^{p}_{L^{p}{[-1,0]}} \geq  \frac{m}{1+ 2^{2-p} } > \frac{m}{2}.
\]
The supports of functions on $[-1,0)$ are not empty and do not coincide. Thus changing the constant we easily get the left side inequality in \eqref{eq:1a} for the general case.

Let $m> 2^{n_{0}}, m\in \IN$ and $\nu \in \IN$ be such that $2^{\nu-1} <m \leq 2^{\nu}$.
By the Khintchine inequality (see \cite{KaSt:35}) it follows that
\[
\int_{[0,1]}\bigg|\sum_{k\in A}\frac{1}{|b_{k,l}|} f_{k}^{l}(x)\bigg|^{p} dx \leq D_{p} \bigg(\sum_{n=1}^{\nu} |c_{n,l}|^{-2} 2^{-\frac{n}{l}} 2^{n}\bigg)^{\frac{p}{2}}.
\]
Clearly
\begin{equation}\label{eq:p11}
|c_{n,l}|^{p} \geq \frac{1}{2} 2^{n(p-2)}\qquad \text{for $n\geq n_{0}$} .
\end{equation}
Thus it follows that
\[
\sum_{n=n_{0}}^{\nu} |c_{n,l}|^{-2} 2^{-\frac{n}{l}} 2^{n} \leq 2^{\frac{2}{p}} \sum_{n=n_{0}}^{\nu} 2^{-\frac{2n(p-2)}{p}}2^{n(1-\frac{1}{l})}.
\]
Let $\kappa = \frac{4}{p}-1-\frac{1}{l}$. If $\kappa >0$ we write
\[
\sum_{n=n_{0}}^{\nu} |c_{n,l}|^{-2} 2^{-\frac{n}{l}} 2^{n} \leq 2^{\frac{2}{p}} \sum_{n=0}^{\nu} 2^{\kappa n} \leq 2^{\frac{2}{p}} \frac{1}{2^{\kappa}-1} 2^{\kappa(\nu +1)} \leq  \frac{2^{2(\kappa + \frac{1}{p})}}{2^{\kappa}-1} m^{\kappa} = C_{p}m^{\kappa} .
\]
Thus it follows
\begin{align*}
  &\bigg\| \sum_{k\in A}\frac{1}{|b_{k,l}|} f_{k}^{l} \bigg\|^{p}_{L^{p}{[-1,1]}} \leq  m + D_{p} (C'_{p} + C_{p}m^{\kappa})^{\frac{p}{2}}\\ &=m(1+ D_{p} (C'_{p}m^{-\frac{2}{p}}+ C_{p}m^{\kappa -\frac{2}{p}})^{\frac{p}{2}}).
\end{align*}
Whence we  obtain the right hand inequality in \eqref{eq:1a} because $\kappa -\frac{2}{p}<0$. If $\kappa\leq 0$ then the proof is obvious.
\end{proof}

\begin{Proposition}\label{prop:2.5}
 The system $\mathfrak{F}_{l}$ is a democratic system for $L^{r}{[-1,1]},$  $ 1\leq r <2,$  $l\in (0, \frac{r}{2(2-r)}]$  and  $\varphi_{\mathfrak{F}_{l}}(m) \asymp m^{\frac{1}{r}}$.
\end{Proposition}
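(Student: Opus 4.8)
The plan is to follow the proof of Proposition~\ref{prop:2.4} closely; the new feature is that for $1\le r<2$ \emph{both} terms of $|c_{n,l}|^r$ tend to $0$ as $n\to\infty$, so the comparison $|c_{n,l}|^r\asymp 2^{n(r-2)}$ is no longer automatic, and this is exactly what the hypothesis $l\le\frac r{2(2-r)}$ will provide. As in Proposition~\ref{prop:2.4} it suffices to establish the analogue of \eqref{eq:1a}, namely that there are $0<C_1(l,r)<C_2(l,r)$ with
\[
C_1(l,r)\,m^{1/r}\le \bigg\|\sum_{k\in A}\frac{1}{|b_{k,l}|}f_k^l\bigg\|_{L^r[-1,1]}\le C_2(l,r)\,m^{1/r}
\]
for every finite $A\subset\IN$ with $|A|=m$, where $|b_{k,l}|=\|f_k^l\|_{L^r[-1,1]}$, so that by the computation \eqref{eq:1.0} with $r$ in place of $p$ one has $|b_{k,l}|^r=|c_{n,l}|^r=2^{n(r-2)}(1-2^{-n/l})^{r/2}+2^{-nr/(2l)}$ for $k=k_{n-1}+j$; democracy and $\varphi_{\mathfrak{F}_l}(m)\asymp m^{1/r}$ then follow exactly as in Proposition~\ref{prop:2.4}. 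The starting observation is that $l\le\frac r{2(2-r)}$ is equivalent both to $-\frac r{2l}\le r-2$ and to $\tau:=\frac{2(2-r)}r-\frac1l\le 0$. From the first, $2^{-nr/(2l)}\le 2^{n(r-2)}$ for all $n\in\IN_0$, and hence, taking $n_0=[l]+1$ as in Proposition~\ref{prop:2.4} (so that $2^{-n/l}\le\frac12$, and therefore $(1-2^{-n/l})^{r/2}\ge\frac12$, for $n\ge n_0$), one gets $\tfrac12\,2^{n(r-2)}\le|c_{n,l}|^r\le 2\cdot 2^{n(r-2)}$ for all $n\ge n_0$.

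Next I would split $\|\cdot\|_{L^r[-1,1]}^r=\|\cdot\|_{L^r[-1,0]}^r+\|\cdot\|_{L^r[0,1]}^r$. On $[-1,0)$ the decisive point is that the intervals $\Delta_j^n$ are pairwise disjoint over all $n$ and $j$ (they lie in the disjoint blocks $(-2^{-n+1},-2^{-n}]$), so on $[-1,0)$ the sum $\sum_{k\in A}|b_{k,l}|^{-1}f_k^l$ consists of functions with disjoint supports, whence
\[
\bigg\|\sum_{k\in A}\frac{1}{|b_{k,l}|}f_k^l\bigg\|_{L^r[-1,0]}^r=\sum_{k\in A}\frac{(1-2^{-n_k/l})^{r/2}\,2^{n_k(r-2)}}{|b_{k,l}|^r},
\]
with every summand in $(0,1]$. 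By the comparison above each summand with $n_k\ge n_0$ is $\ge\tfrac14$, while the finitely many summands with $n_k<n_0$ are bounded below by a positive constant; hence $\|\cdot\|_{L^r[-1,0]}^r\ge c_1 m$ for a suitable $c_1>0$ — this gives the left inequality — and also the crude bound $\|\cdot\|_{L^r[-1,0]}^r\le m$.

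On $[0,1]$ the functions $r_{k_{n-1}+j}$ that occur are distinct Rademacher functions, orthonormal in $L^2[0,1]$, so since $r\le 2$, H\"older's inequality on the probability space $[0,1]$ (no need for the lower Khintchine bound here) gives
\[
\bigg\|\sum_{k\in A}\frac{1}{|b_{k,l}|}f_k^l\bigg\|_{L^r[0,1]}^r\le\bigg\|\sum_{k\in A}\frac{1}{|b_{k,l}|}f_k^l\bigg\|_{L^2[0,1]}^r=\bigg(\sum_{k\in A}\frac{2^{-n_k/l}}{|c_{n_k,l}|^2}\bigg)^{r/2}.
\]
From $|c_{n,l}|^2\ge 2^{-2/r}\,2^{2n(r-2)/r}$ for $n\ge n_0$ one obtains $\frac{2^{-n/l}}{|c_{n,l}|^2}\le 2^{2/r}\,2^{n\tau}\le 2^{2/r}$ because $\tau\le 0$; adding in the finitely many terms with $n_k<n_0$ yields $\sum_{k\in A}\frac{2^{-n_k/l}}{|c_{n_k,l}|^2}\le C_3\,m$, so $\|\cdot\|_{L^r[0,1]}^r\le(C_3 m)^{r/2}\le C_3^{r/2}m$ since $r/2\le 1$ and $m\ge1$. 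Combining the two pieces gives the upper bound $\|\cdot\|_{L^r[-1,1]}^r\le(1+C_3^{r/2})m$. I do not foresee a serious obstacle: the one point worth watching is that the three places where the hypothesis on $l$ enters — the lower estimate on $[-1,0)$, the comparison $|c_{n,l}|^r\asymp 2^{n(r-2)}$, and the bound $\tau\le 0$ on $[0,1]$ — all reduce to the single inequality $l\le\frac r{2(2-r)}$, so the endpoint $l=\frac r{2(2-r)}$ (where $\tau=0$) is included, and the bookkeeping for the finitely many low levels $n<n_0$ is absorbed into the constants exactly as in the proof of Proposition~\ref{prop:2.4}.
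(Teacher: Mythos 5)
Your proposal is correct and follows essentially the same route as the paper: split the $L^r$ norm over $[-1,0)$ and $[0,1]$, get the lower bound (and trivial upper bound) from the disjointly supported step functions on $[-1,0)$, and bound the Rademacher part on $[0,1]$ by the $\ell^2$ sum of coefficients, with the hypothesis $l\le \frac{r}{2(2-r)}$ entering exactly where you say. Your use of H\"older (monotonicity of $L^p$ norms on a probability space) in place of the paper's explicit appeal to Khintchine's inequality is only a cosmetic difference, since for $r\le 2$ the upper Khintchine bound is precisely that estimate.
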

\begin{proof}
We have that
\begin{equation}\label{eq:r1}
|\hat{c}_{n,l}|^{r}:= \bigg\|  f_{j}^{(n,l)} \bigg\|^{r}_{L^{r}{[-1,1]}} = 2^{n(r-2)}(1-2^{-\frac{n}{l}})^{\frac{r}{2}} +2^{-\frac{nr}{2l}}.
\end{equation}
As above we put  $|\hat{b}_{k,l}|^{r}:=|\hat{c}_{n,l}|^{r}$ if $k= k_{n-1}+j$ and $1\leq j\leq 2^{n}$.

If $l\in (0,\frac{r}{2(2-r)}]$ then $2^{n(r-2)} \geq 2^{-\frac{nr}{2l}}$ and it follows that
\[
  \bigg\| \sum_{k\in A}\frac{1}{|\hat{b}_{k,l}|} f_{k}^{l} \bigg\|^{r}_{L^{r}{[-1,1]}} \geq  \frac{m}{2}(1-2^{-\frac{1}{l}})^{\frac{r}{2}}.
\]
On the other hand we have that
there exists $n_{1}\in \IN$ such that
\[
|\hat{c}_{n,l}|^{r} \geq \frac{1}{2} 2^{n(r-2)}\qquad \text{for $n\geq n_{1}$} .
\]
Let $A_{n} = [k_{n-1}+1, k_{n}]\bigcap A, n\in \IN$ and $\Omega_{A}=\{n\in \IN: A_{n}\neq \emptyset \}$.

Let $m> 2^{n_{1}}, m\in \IN$ and $\nu \in \IN$ be such that $2^{\nu-1} < m \leq 2^{\nu}$ then by the Khintchine inequality it follows that
\[
\int_{[0,1]}\bigg|\sum_{k\in A}\frac{1}{|\hat{b}_{k,l}|} f_{k}^{l}(x)\bigg|^{r} dx \leq D_{r} \bigg(\sum_{n\in \Omega_{A} } |\hat{c}_{n,l}|^{-2} 2^{-\frac{n}{l}} |A_{n}| \bigg)^{\frac{r}{2}}
\]
\[
\leq 2 D_{r}  \bigg(\sum_{n\in \Omega_{A} } 2^{\frac{2n(2-r)}{r}}  2^{-\frac{n}{l}} |A_{n}| \bigg)^{\frac{r}{2}} \leq 2 D_{r} m^{\frac{r}{2}}
\]
if $l\leq \frac{r}{2(2-r)}.  $
 Afterwards
we proceed as in the proof of Proposition \ref{prop:2.4} and easily finish the proof.

\end{proof}

\begin{Proposition}\label{prop:2.6}
 The system $\mathfrak{F}_{l}$ is not a democratic system for $L^{r}{[-1,1]},$ $ 1\leq r <2$ if $l\in ( \frac{r}{2(2-r)}, \frac{r}{2-r})$.
\end{Proposition}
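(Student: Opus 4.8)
The plan is to contradict the democracy inequality \eqref{dem:1} directly, by exhibiting for each $\nu\in\IN$ two index sets of the same cardinality $m=2^{\nu}$ whose normalized partial sums have $L^{r}$-norms of different polynomial orders in $m$. Guided by the structure of $\mathfrak{F}_{l}$, I would take $A_{1}$ to be the set of all $2^{\nu}$ indices belonging to the single block $n=\nu$ (so the corresponding functions are $f_{j}^{(\nu,l)}$, $1\le j\le 2^{\nu}$), and $A_{2}=\{k_{n-1}+1:\ 1\le n\le 2^{\nu}\}$ to pick one index out of each of the first $2^{\nu}$ blocks. For $A_{1}$ the restriction to $[-1,0)$ of the normalized sum is a single multiple of $I_{(-2^{-\nu+1},-2^{-\nu}]}$, because the $2^{\nu}$ indicator pieces $\chi_{j}^{\nu}$ tile that interval, so its $L^{r}$-norm is evaluated exactly; for $A_{2}$ the $[-1,0)$-parts have pairwise disjoint supports and the $[0,1]$-parts are distinct Rademacher functions, which one handles by a disjointness sum and by the Khintchine inequality already used in the proof of Proposition~\ref{prop:2.4}.

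The first step is to control the normalizing constants $|\hat{b}_{k,l}|=\|f_{k}^{l}\|_{L^{r}}$. From \eqref{eq:r1}, $|\hat{c}_{n,l}|^{r}=2^{n(r-2)}(1-2^{-n/l})^{r/2}+2^{-nr/(2l)}$, and the hypothesis $l>\frac{r}{2(2-r)}$ says precisely that $2^{n(r-2)}\le 2^{-nr/(2l)}$ for every $n\ge1$; hence $|\hat{c}_{n,l}|^{r}\asymp 2^{-nr/(2l)}$ uniformly in $n$, the Rademacher term dominating each norm. I would then compute, for $A_{1}$,
\[
\Big\|\sum_{k\in A_{1}}\frac{f_{k}^{l}}{|\hat{b}_{k,l}|}\Big\|_{L^{r}[-1,1]}^{r}\ \ge\ \Big\|\sum_{k\in A_{1}}\frac{f_{k}^{l}}{|\hat{b}_{k,l}|}\Big\|_{L^{r}[-1,0]}^{r}\ =\ \frac{(1-2^{-\nu/l})^{r/2}\,2^{\nu(r-1)}}{|\hat{c}_{\nu,l}|^{r}}\ \gtrsim\ 2^{\nu\beta r},
\]
where $\beta:=1-\tfrac1r+\tfrac1{2l}$, so that $\big\|\sum_{k\in A_{1}}f_{k}^{l}/|\hat{b}_{k,l}|\big\|_{L^{r}}\gtrsim m^{\beta}$; and for $A_{2}$ I would bound the $[-1,0)$-contribution to the $r$-th power by $\sum_{n\ge1}2^{n(r-2)}/|\hat{c}_{n,l}|^{r}\asymp\sum_{n\ge1}2^{n(r-2+r/(2l))}$, a convergent geometric series since $r-2+\frac r{2l}<0$ (again by $l>\frac{r}{2(2-r)}$), hence $O(1)$, and the $[0,1]$-contribution by $D_{r}m^{r/2}$ via Khintchine (the coefficients $2^{-n/(2l)}/|\hat{c}_{n,l}|$ are $\le1$); this gives $\big\|\sum_{k\in A_{2}}f_{k}^{l}/|\hat{b}_{k,l}|\big\|_{L^{r}}\lesssim m^{1/2}$.

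The last step is the exponent comparison: $\beta>\tfrac12$ is equivalent to $\tfrac1{2l}>\tfrac1r-\tfrac12=\tfrac{2-r}{2r}$, that is, to $l<\tfrac{r}{2-r}$, which is the remaining hypothesis. Consequently the ratio of the two norms is $\gtrsim m^{\beta-1/2}\to\infty$ as $m=2^{\nu}\to\infty$, so no fixed $D$ can satisfy \eqref{dem:1} and $\mathfrak{F}_{l}$ is not democratic for $L^{r}[-1,1]$. I do not anticipate a genuine obstacle: the computation is disjointness plus Khintchine. The only points needing care are the two elementary equivalences — that $l>\frac{r}{2(2-r)}$ is exactly what makes the Rademacher term govern $|\hat{c}_{n,l}|$ (and makes the $[-1,0)$ series for $A_{2}$ converge), and that $l<\frac{r}{2-r}$ is exactly what pushes $\beta$ above $\tfrac12$ — so that the two endpoints of the interval play complementary roles; incidentally the same estimates show $\varphi_{\mathfrak{F}_{l}}(m)\asymp m^{\beta}$ on this range, though that is not required here.
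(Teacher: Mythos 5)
Your proof is correct and follows essentially the same route as the paper: both arguments take the full block $n=\nu$ as the set with large norm (your exponent $\beta$ satisfies $\beta r=\frac{r}{2l}+r-1$, which is exactly the paper's $\omega$, and the condition $\beta>\frac12$ is the paper's $\omega>\frac r2$), both control all normalizations via $|\hat{c}_{n,l}|^{r}\asymp 2^{-\frac{nr}{2l}}$, and both invoke Khintchine for the Rademacher part. The only difference is the choice of the second test set --- the paper takes $2^{n}$ consecutive indices inside the distant block $n^{2}$, while you take one index from each of the first $2^{\nu}$ blocks --- but both choices give the $O(m^{1/2})$ upper bound for the same underlying reason, namely $r-2+\frac{r}{2l}<0$.
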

\begin{proof}
If $l> \frac{r}{2(2-r)}$ then $2^{n(r-2)} < 2^{-\frac{nr}{2l}}$. Thus
 for any $n\in \IN$
\begin{equation}\label{eq:261}
2^{-\frac{nr}{2l}} \leq |\hat{c}_{n,l}|^{r} \leq 2\cdot 2^{-\frac{nr}{2l}},
\end{equation}
where $\hat{c}_{n,l}$ is defined by \eqref{eq:r1}.
Let $B_{n} = [k_{n-1}+1, k_{n}]\bigcap \IN, n\in \IN$. Then it follows that
\begin{align*}\label{dem:2}
 &\bigg\| \sum_{k\in B_{n}}\frac{1}{|\hat{b}_{k,l}|} f_{k}^{l} \bigg\|^{r}_{L^{r}_{[-1,1]}} =  \bigg\|\frac{1}{|\hat{c}_{n,l}|} \sum_{k\in B_{n}} f_{k}^{l} \bigg\|^{r}_{L^{r}_{[-1,1]}} \geq \frac{1}{2} 2^{\frac{nr}{2l}} 2^{n(r-1)}(1-2^{-\frac{n}{l}})^{\frac{r}{2}} \\
 & + \frac{1}{2} \bigg\| \sum_{j= k_{n-1}+1}^{k_{n}} r_{j}(\cdot) \bigg\|^{r}_{L^{r}_{[0,1]}} \geq \frac{1}{2} 2^{n \omega} (1-2^{-\frac{n}{l}})^{\frac{r}{2}} +D^{*}_{r} 2^{\frac{nr}{2}},
\end{align*}
where $\omega = \frac{r}{2l} +r-1$ and $D^{*}_{r} >0$. Observe that
\[
\frac{r}{2} < \omega <1 \quad \text{if $l\in \bigg( \frac{r}{2(2-r)}, \frac{r}{2-r}\bigg)$.}
\]
Afterwards we consider $B^{*}_{n} = [k_{n^{2}-1}+1, k_{n^{2}-1}+2^{n}]\bigcap \IN, n\in \IN$. In this case we have
\begin{align*}\label{dem:3}
 &\bigg\| \sum_{k\in B^{*}_{n}}\frac{1}{|\hat{b}_{k,l}|} f_{k}^{l} \bigg\|^{r}_{L^{r}_{[-1,1]}} =  \bigg\|\frac{1}{|\hat{c}_{n^{2},l}|} \sum_{k\in B^{*}_{n}} f_{k}^{l} \bigg\|^{r}_{L^{r}_{[-1,1]}} \leq  2^{\frac{n^{2}r}{2l}} 2^{n^{2}(r-2)}2^{n}(1-2^{-\frac{n^{2}}{l}})^{\frac{r}{2}} \\
 & + \bigg\| \sum_{j= k_{n^{2}-1}+1}^{k_{n^{2}-1}+2^{n}} r_{j}(\cdot) \bigg\|^{r}_{L^{r}_{[0,1]}} \leq \frac{1}{2} 2^{n^{2} \omega_{1}+n} (1-2^{-\frac{n^{2}}{l}})^{\frac{r}{2}} +D_{r} 2^{\frac{nr}{2}},
\end{align*}
where $D_{r} >0$ and $\omega_{1} = \frac{r}{2l} +r-2 = \omega - 1 < 0 $ if $l\in \bigg( \frac{r}{2(2-r)}, \frac{r}{2-r}\bigg)$. 
The inequality $\frac{r}{2} < \omega$ yields that the system $\mathfrak{F}_{l}$ is not a democratic system.
\end{proof}

\begin{Proposition}\label{prop:2.7}
 The system $\mathfrak{F}_{l}$ is a democratic system for $L^{r}{[-1,1]},$ $ 1\leq r <2,$ if $l\in [ \frac{r}{2-r}, \infty)$.  Moreover,  $\varphi_{\mathfrak{F}_{l}}(n) \asymp n^{\frac{1}{2}}$.
\end{Proposition}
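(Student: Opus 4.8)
The plan is to produce constants $0<c<C$, depending only on $r$ and $l$, such that
\[
 c\,|A|^{1/2}\le\Big\|\sum_{k\in A}\frac{1}{|\hat b_{k,l}|}f_k^l\Big\|_{L^r[-1,1]}\le C\,|A|^{1/2}
\]
for every finite $A\subset\IN$; this gives democracy at once and, on taking suprema, also $\varphi_{\mathfrak{F}_l}(m)\asymp m^{1/2}$. First I would note that $l\ge\frac{r}{2-r}$ forces $l>\frac{r}{2(2-r)}$, so the two-sided estimate \eqref{eq:261} of Proposition \ref{prop:2.6} holds for every $n\in\IN$: $2^{-\frac{nr}{2l}}\le|\hat c_{n,l}|^r\le 2\cdot 2^{-\frac{nr}{2l}}$; raising to the power $2/r$ gives $2^{-n/l}\le|\hat c_{n,l}|^2\le 2^{2/r}\,2^{-n/l}$. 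Then I would split $\|\cdot\|_{L^r[-1,1]}^r=\|\cdot\|_{L^r[-1,0)}^r+\|\cdot\|_{L^r[0,1]}^r$ and treat the two pieces separately.

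On $[0,1]$ one has $\frac{1}{|\hat b_{k,l}|}f_k^l=\frac{2^{-n(k)/(2l)}}{|\hat c_{n(k),l}|}\,r_k$, where $n(k)$ denotes the level of the index $k$ (i.e.\ $k_{n(k)-1}<k\le k_{n(k)}$), and the $r_k$ with $k\in A$ are pairwise distinct Rademacher functions. By the Khintchine inequality (see \cite{KaSt:35}),
\[
 \Big\|\sum_{k\in A}\frac{2^{-n(k)/(2l)}}{|\hat c_{n(k),l}|}\,r_k\Big\|_{L^r[0,1]}^r\asymp\Big(\sum_{k\in A}\frac{2^{-n(k)/l}}{|\hat c_{n(k),l}|^2}\Big)^{r/2},
\]
and each summand lies in $[2^{-2/r},1]$ by the previous paragraph, so the right-hand side is $\asymp|A|^{r/2}$. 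In particular $\|\cdot\|_{L^r[-1,1]}^r\ge\|\cdot\|_{L^r[0,1]}^r\ge c\,|A|^{r/2}$, which already yields the lower bound, valid for every finite $A$.

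For the upper bound it remains to control $\|\cdot\|_{L^r[-1,0)}^r$. The functions $\chi_j^n$ have pairwise disjoint supports (within each level and across levels) and $\|\chi_j^n\|_{L^r[-1,0)}^r=2^{n(r-2)}$, so with $A_n=[k_{n-1}+1,k_n]\cap A$ one gets
\[
 \Big\|\sum_{k\in A}\frac{1}{|\hat b_{k,l}|}f_k^l\Big\|_{L^r[-1,0)}^r=\sum_{n}\frac{|A_n|\,(1-2^{-n/l})^{r/2}\,2^{n(r-2)}}{|\hat c_{n,l}|^r}\asymp\sum_n|A_n|\,2^{n\omega_1},
\]
using $(1-2^{-n/l})^{r/2}\asymp 1$ for $n\ge 1$ and the estimate of $|\hat c_{n,l}|^r$ above; here $\omega_1=r-2+\frac{r}{2l}<0$. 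Set $\omega=1+\omega_1=r-1+\frac{r}{2l}$; since $1\le r<2$ and $l\ge\frac{r}{2-r}$ one checks that $0<\omega\le\frac r2<1$. Choosing $\nu$ with $2^{\nu-1}<|A|\le 2^\nu$ and splitting the sum according to $n\le\nu$ and $n>\nu$, using $|A_n|\le 2^n$ and $\sum_n|A_n|=|A|$, I bound $\sum_n|A_n|2^{n\omega_1}$ by $\sum_{n=1}^\nu 2^{n\omega}+2^{\nu\omega_1}|A|\le C\,2^{\nu\omega}+|A|^{\omega_1+1}\le C'\,|A|^\omega$ (here $\omega>0$ is used for the geometric sum and $\omega_1<0$ together with $2^\nu\ge|A|$ for the second term).

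The decisive observation is that the hypothesis $l\ge\frac{r}{2-r}$ is exactly equivalent to $\omega\le\frac r2$ (indeed $r-1+\frac{r}{2l}\le\frac r2\Longleftrightarrow\frac rl\le 2-r\Longleftrightarrow l\ge\frac{r}{2-r}$), whence $|A|^\omega\le|A|^{r/2}$ and therefore $\|\cdot\|_{L^r[-1,0)}^r\le C'\,|A|^{r/2}$ as well. Combining with the $[0,1]$ estimate gives $\big\|\sum_{k\in A}\frac{1}{|\hat b_{k,l}|}f_k^l\big\|_{L^r[-1,1]}\asymp|A|^{1/2}$ uniformly in the finite set $A$, which is precisely democracy together with $\varphi_{\mathfrak{F}_l}(m)\asymp m^{1/2}$. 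I expect the only real work to be the bookkeeping in the $[-1,0)$ estimate — in particular making precise the water-filling bound $\sum_n|A_n|2^{n\omega_1}\le C|A|^\omega$ under the constraints $|A_n|\le 2^n$ and $\sum_n|A_n|=|A|$, and checking that every implied constant depends only on $r$ and $l$; the $[0,1]$ part is a direct application of Khintchine once the normalizing factors are identified.
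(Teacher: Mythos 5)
Your proof is correct and follows essentially the same route as the paper: the lower bound and the $[0,1]$ upper bound come from Khintchine's inequality after observing that the Rademacher coefficients $2^{-n/(2l)}/|\hat c_{n,l}|$ are bounded above and below via \eqref{eq:261}, and the $[-1,0)$ part is controlled by the disjointness of the supports of the $\chi_j^n$ together with the geometric-sum bound $\sum_n|A_n|2^{n\omega_1}\le C|A|^{r/2}$, which is exactly where $l\ge\frac{r}{2-r}$ enters. Your explicit split of that sum at the level $\nu$ with $2^{\nu-1}<|A|\le 2^\nu$ is just a more carefully written version of the paper's one-line estimate $\sum_{n}2^{n(\frac r2-1)}|A_n|\le\sum_{k=1}^{\nu}2^{kr/2}$.
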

\begin{proof}
If $l\in [ \frac{r}{2-r}, \infty)$ we have that the inequalities \eqref{eq:261} hold. Hence, for any $A\subset \IN, |A|=m$
\[
 \bigg\| \sum_{k\in A}\frac{1}{|\hat{b}_{k,l}|} f_{k}^{l} \bigg\|^{r}_{L^{r}{[-1,1]}} \geq  \frac{1}{2} \bigg\| \sum_{n\in \Omega_{A} } \sum_{k\in A_{n}} r_{k} \bigg\|^{r}_{L^{r}{[0,1]}} \geq C_{r} m^{\frac{r}{2}},
 \]
 where the last inequality follows by the Khintchine inequalities and $C_{r} >0$.
Let $\nu \in \IN$ be such that $2^{\nu-1} \leq  m < 2^{\nu}$. Then
\begin{align*}
  &\bigg\| \sum_{k\in A}\frac{1}{|\hat{b}_{k,l}|} f_{k}^{l} \bigg\|^{r}_{L^{r}{[-1,0]}} \leq  \sum_{n\in \Omega_{A} } \sum_{k\in A_{n}} 2^{n(r-2+\frac{r}{2l})}\\
   &\leq \sum_{n\in \Omega_{A} } 2^{n(\frac{r}{2}-1)} |A_{n}| \leq \sum_{k=1}^{\nu} 2^{k\frac{r}{2}} \leq \frac{2^{\frac{r}{2}} }{2^{\frac{r}{2}} -1 } m^{\frac{r}{2}}.
\end{align*}
By Khintchine's inequalities we obtain 
  \[
  \bigg\| \sum_{k\in A}\frac{1}{|\hat{b}_{k,l}|} f_{k}^{l} \bigg\|^{r}_{L^{r}{[0,1]}} \leq  \bigg\| \sum_{n\in \Omega_{A} } \sum_{k\in A_{n}} r_{k} \bigg\|^{r}_{L^{r}{[0,1]}} \leq D_{r} m^{\frac{r}{2}}.
\]

\end{proof}

Resuming the propositions proved above we easily obtain the following theorem.

\begin{Theorem}\label{T:0.1} Let $p>2$ and $p'= \frac{p}{p-1}$. Then the pair $(\mathfrak{F}_{l},\mathfrak{F}_{l})$ is bidemocratic for $L^{p}[-1,1]$  and for $L^{p'}[-1,1]$  if $l\in (0, \frac{p}{2(p-2)}].$    The system $\mathfrak{F}_{l}$  is not democratic for $L^{p'}[-1,1]$ when $l\in (\frac{p}{2(p-2)}, \frac{p}{p-2}). $

When $l> \frac{p}{2(p-2)}$ the pair $(\mathfrak{F}_{l},\mathfrak{F}_{l})$ is not bidemocratic neither for $L^{p}[-1,1]$ nor for $L^{p'}[-1,1]$.
\end{Theorem}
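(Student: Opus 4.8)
The plan is to obtain all three assertions by specializing Propositions~\ref{prop:2.4}--\ref{prop:2.7} to $r=p'$ and using the elementary identities
\[
\frac{p'}{2(2-p')}=\frac{p}{2(p-2)},\qquad \frac{p'}{2-p'}=\frac{p}{p-2},
\]
which turn the three ranges of $l$ in the statement into the ranges appearing in those propositions. The only point not literally contained in them is the passage between the fundamental function $\varphi_{\mathfrak{F}_{l}}$ of the system $\mathfrak{F}_{l}$, computed in $L^{p}[-1,1]$ or in $L^{p'}[-1,1]$, and the dual fundamental function $\varphi^{*}_{X^{*}}$ of the biorthogonal pair $(\mathfrak{F}_{l},\mathfrak{F}_{l})$: here $X=\{f^{l}_{k}\}$ lives in $\IB$ and $X^{*}=\{f^{l}_{k}\}$ is the same orthonormal family regarded in $\IB^{*}$ through the $L^{2}$ pairing, so that $\IB^{*}=L^{p'}$ when $\IB=L^{p}$ and $\IB^{*}=L^{p}$ when $\IB=L^{p'}$.

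First I would record the two-sided norm estimates for the generators. From \eqref{eq:1.0} and \eqref{eq:p11}, for $k=k_{n-1}+j$ in the $n$-th block one has $\|f^{l}_{k}\|_{L^{p}}\asymp 2^{\,n(1-2/p)}$ for every $l\in(0,\infty)$; from \eqref{eq:r1} one gets $\|f^{l}_{k}\|_{L^{p'}}\asymp 2^{-n(1-2/p)}$ when $l\le\frac{p}{2(p-2)}$, while \eqref{eq:261} gives $\|f^{l}_{k}\|_{L^{p'}}\asymp 2^{-n/(2l)}$ when $l>\frac{p}{2(p-2)}$. Hence the norm product $\theta_{k}:=\|f^{l}_{k}\|_{L^{p}}\,\|f^{l}_{k}\|_{L^{p'}}$ is bounded above and below by positive constants, uniformly in $k$, exactly when $l\le\frac{p}{2(p-2)}$, whereas for $l>\frac{p}{2(p-2)}$ it behaves like $2^{\,n(1-2/p-1/(2l))}$ with positive exponent and so is unbounded. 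The comparison lemma I need is that, since each $f^{l}_{k}$ is a multiple of the characteristic function $\chi^{n}_{j}$ on $[-1,0)$ (these having pairwise disjoint supports over all $k$) plus a multiple of the Rademacher function $r_{k_{n-1}+j}$ on $[0,1]$ (these having pairwise distinct indices), disjointness of supports on $[-1,0)$ together with Khintchine's inequality on $[0,1]$ give, for any finite $P\subset\IN$, any $q\in\{p,p'\}$ and any scalars $c_{k}$ with $0<c'\le c_{k}\le C'$,
\[
\Bigl\|\sum_{k\in P}c_{k}\,\frac{f^{l}_{k}}{\|f^{l}_{k}\|_{L^{q}}}\Bigr\|_{L^{q}[-1,1]}\ \asymp\ \Bigl\|\sum_{k\in P}\frac{f^{l}_{k}}{\|f^{l}_{k}\|_{L^{q}}}\Bigr\|_{L^{q}[-1,1]}.
\]

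With these tools the first assertion is immediate. Fix $l\in(0,\frac{p}{2(p-2)}]$. Writing $\|f^{l}_{k}\|_{\IB}f^{l}_{k}=\theta_{k}\,\frac{f^{l}_{k}}{\|f^{l}_{k}\|_{\IB^{*}}}$ and applying the comparison lemma with $c_{k}=\theta_{k}$ (admissible by the previous step) yields $\varphi^{*}_{X^{*}}(n)\asymp\varphi_{\mathfrak{F}_{l}}(n)$ computed in $\IB^{*}$. So, using Proposition~\ref{prop:2.4} (which gives $\varphi_{\mathfrak{F}_{l}}(n)\asymp n^{1/p}$ in $L^{p}$) and Proposition~\ref{prop:2.5} with $r=p'$ (which gives $\varphi_{\mathfrak{F}_{l}}(n)\asymp n^{1/p'}$ in $L^{p'}$), in both cases $\IB=L^{p}$ and $\IB=L^{p'}$ one gets $\varphi_{X}(n)\,\varphi^{*}_{X^{*}}(n)\asymp n^{1/p}n^{1/p'}=n$, so \eqref{bide:1} holds and $(\mathfrak{F}_{l},\mathfrak{F}_{l})$ is bidemocratic for $L^{p}$ and for $L^{p'}$. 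The second assertion is Proposition~\ref{prop:2.6} with $r=p'$, which, via the identities above, says precisely that $\mathfrak{F}_{l}$ is not democratic for $L^{p'}$ when $l\in(\frac{p}{2(p-2)},\frac{p}{p-2})$.

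For the third assertion I would invoke Remark~\ref{rem:1}: if $l>\frac{p}{2(p-2)}$ then, by the norm estimates above, $\theta_{k}=\|f^{l}_{k}\|_{L^{p}}\|f^{l}_{k}\|_{L^{p'}}$ is unbounded, so \eqref{bide:2} fails; since $\theta_{k}$ is the same quantity whether one takes $\IB=L^{p}$ (hence $\IB^{*}=L^{p'}$) or $\IB=L^{p'}$ (hence $\IB^{*}=L^{p}$), the pair $(\mathfrak{F}_{l},\mathfrak{F}_{l})$ is bidemocratic for neither space. (For $l\in(\frac{p}{2(p-2)},\frac{p}{p-2})$ one could alternatively combine Proposition~\ref{prop:2.6} with Proposition~\ref{prop:2.1}.) I do not expect a genuine obstacle, as everything reduces to the preceding propositions and Remark~\ref{rem:1}; the one step needing a little care is the equivalence $\varphi^{*}_{X^{*}}\asymp\varphi_{\mathfrak{F}_{l}}(\cdot)$ in the dual space, i.e. that replacing the weights $\|f^{l}_{k}\|$ by their counterparts $\theta_{k}/\|f^{l}_{k}\|$ distorts the relevant norms only by constants — and this is exactly what the disjoint-support structure on $[-1,0)$ and Khintchine's inequality on $[0,1]$, the devices already used throughout this section, provide, the hypothesis $l\le\frac{p}{2(p-2)}$ entering only through the boundedness of $\theta_{k}$.
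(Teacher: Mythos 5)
Your proposal is correct and follows the same route the paper intends: the paper derives Theorem~\ref{T:0.1} by ``resuming'' Propositions~\ref{prop:2.4}--\ref{prop:2.7} together with Remark~\ref{rem:1}, exactly as you do after substituting $r=p'$ and noting $\frac{p'}{2(2-p')}=\frac{p}{2(p-2)}$ and $\frac{p'}{2-p'}=\frac{p}{p-2}$. Your explicit comparison lemma identifying $\varphi^{*}_{X^{*}}$ with the fundamental function of $\mathfrak{F}_{l}$ in the dual space when $\theta_{k}\asymp 1$ is precisely the detail the paper leaves implicit, and your verification of it via disjoint supports on $[-1,0)$ and Khintchine's inequality on $[0,1]$ is sound.
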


\

\end{document}